\def\blfootnote{\xdef\@thefnmark{}\@footnotetext}
\numberwithin{equation}{section}
\newtheorem{theorem}{Theorem}[section]
\newtheorem{proposition}[theorem]{Proposition}
\theoremstyle{remark}
\newtheorem{remark}[theorem]{Remark}
\crefname{assumption}{Assumption}{Assumptions}
\newcommand\norm[1]{\left\Vert#1\right\Vert}
\newcommand\R{\mathbb{R}}
\newcommand\bx{\mathbf{x}}
\newcommand\cL{\mathcal{L}}
\DeclareMathOperator{\diag}{diag}
\title{A Nonoverlapping Domain Decomposition Method for Extreme Learning Machines: Elliptic Problems
\blfootnote{This work was supported in part by the National Research Foundation (NRF) of Korea grant funded by the Korea government (MSIT) (No. RS2023-00208914), and in part by Basic Science Research Program through NRF funded by the Ministry of Education (No. RS2023-00247199).}}
\author{Chang-Ock Lee$^{a}$,
  Youngkyu Lee$^{b}$, and Byungeun Ryoo$^{a}$
}
\affil{$^{a}$Department of Mathematical Sciences, KAIST, Daejeon 34141, Korea}
\affil{$^{b}$Division of Applied Mathematics, Brown University, Providence, RI 02912, USA}
\date{}
\begin{document}
\maketitle

\begin{abstract}
Extreme learning machine~(ELM) is a methodology for solving partial differential equations~(PDEs) using a single hidden layer feed-forward neural network.
It presets the weight/bias coefficients in the hidden layer with random values, which remain fixed throughout the computation, and uses a linear least squares method for training the
parameters of the output layer of the neural network.
It is known to be much faster than Physics informed neural networks.
However, classical ELM is still computationally expensive when a high level of representation is desired in the solution as this requires solving a large least squares system.
In this paper, we propose a nonoverlapping domain decomposition method (DDM) for ELMs that not only reduces the training time of ELMs, but is also suitable for parallel computation.
In numerical analysis, DDMs have been widely studied to reduce the time to obtain finite element solutions for elliptic PDEs through parallel computation.
Among these approaches, nonoverlapping DDMs are attracting the most attention.
Motivated by these methods, we introduce local neural networks, which are valid only at corresponding subdomains, and an auxiliary variable at the interface.
We construct a system on the variable and the parameters of local neural networks.
A Schur complement system on the interface can be derived by eliminating the parameters of the output layer.
The auxiliary variable is then directly obtained by solving the reduced system after which the parameters for each local neural network are solved in parallel.
A method for initializing the hidden layer parameters suitable for high approximation quality in large systems is also proposed.
Numerical results that verify the acceleration performance of the proposed method with respect to the number of subdomains are presented.

\end{abstract}

{\small \textbf{Key words}
Extreme learning machine, Nonoverlapping domain decomposition method, Elliptic problem, Parallel computation, Weight initialization
}

{\small \textbf{AMS subject classifications~(2020)}
35J25, 65N55, 68T07
}

\section{Introduction}
\label{Sec:Int}
Recently, deep neural networks have shown great performance in computer vision~\cite{he2016deep,krizhevsky2012imagenet,lecun2015deep} and natural language processing problems~\cite{radford2015unsupervised,ronneberger2015u,zhang2017beyond}.
Meanwhile, since neural networks can approximate any continuous function according to the Universal Approximation Theorem~\cite{barron1993universal,siegel2020approximation}, much research is being actively conducted to apply neural networks to various problems.
Among them, research on solving partial differential equations~(PDEs) based on neural networks is attracting attention, and the most popular method is Physics Informed Neural Networks~(PINNs)~\cite{raissi2019physics}.

There is a different type of method, so called extreme learning machines (ELMs) \cite{huang2011extreme, huang2006extreme} to solve the collocation system for the parameters of the output layer.
In this method, the weight/bias coefficients in all the hidden layers of the neural network are preset to random values and fixed throughout the computation.
The weight coefficients in the output layer of the neural network are trained by a least squares computation, not by back-propagation of gradients.
There is also a method that combines ELMs and PINNs, called Physics informed ELM (PIELM)~\cite{dwivedi2020physics}. Instead of solving the collocation system used in ELM, it optimizes the parameters of the output layer by minimizing the physics informed loss. PIELM is the same as ELM except that it contains the number of collocation points in the induced least squares system.

\begin{figure}
  \centering
  \includegraphics[width=0.5\linewidth, trim=7cm 6cm 7cm 6cm,clip]{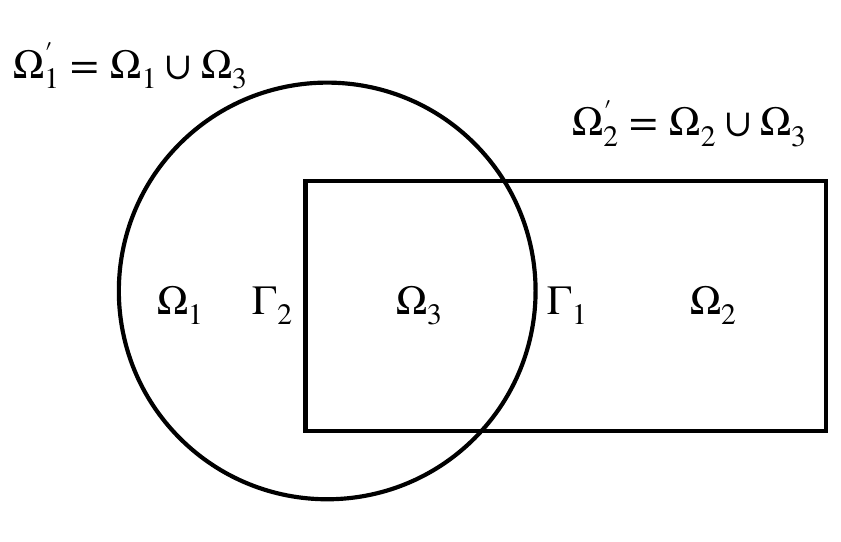}
  \caption{The domain $\Omega$ is the union of two overlapping subdomains $\Omega_{1}^{'}=\Omega_{1} \cup \Omega_{3}$ and $\Omega_{2}^{'}=\Omega_{2} \cup \Omega_{3}$. Note that $\Gamma_{1}$ and $\Gamma_{2}$ are interfaces.}
  \label{fig:schwarz}
\end{figure}

In the field of numerical analysis, there are many fast solvers for PDEs.
The domain decomposition method (DDM) is one of them, which is suitable for parallel computing of large-scale problems and has been successfully developed for elliptic PDEs~\cite{dolean2015introduction,toselli2005domain}.
Additionally, research on applying DDMs to convex optimizations~\cite{badea2012one,park2020additive} and deep learning problems~\cite{gunther2020layer,lee2022two,lee2024parareal} is actively underway.
Depending on how the domain is divided, there are two types of DDMs: overlapping and nonoverlapping methods.
An overlapping method divides the domain into a union of subdomains as shown in~\cref{fig:schwarz}. See \cite{lions1988schwarz} for more details.
On the other hand, in a nonoverlapping method, the domain is divided into nonoverlapping subdomains with interfaces.
See~\cref{fig:nonoverlap} for a graphical description.
Typical nonoverlapping methods are the substructuring methods~\cite{bramble1986construction,farhat1991method} that solve the reduced problem defined on the interface for the interface unknowns and then solve the interior unknowns through parallel computation for each subdomain.
When performing a proper nonoverlapping decomposition, the added computational and communication costs are very small because the interface part is sparse.
Therefore, the interface part requires sequential computation, but does not become a bottleneck in the overall parallel computation.

\begin{figure}
  \centering
  \includegraphics[width=0.8\linewidth, trim=2cm 8cm 2cm 8cm,clip]{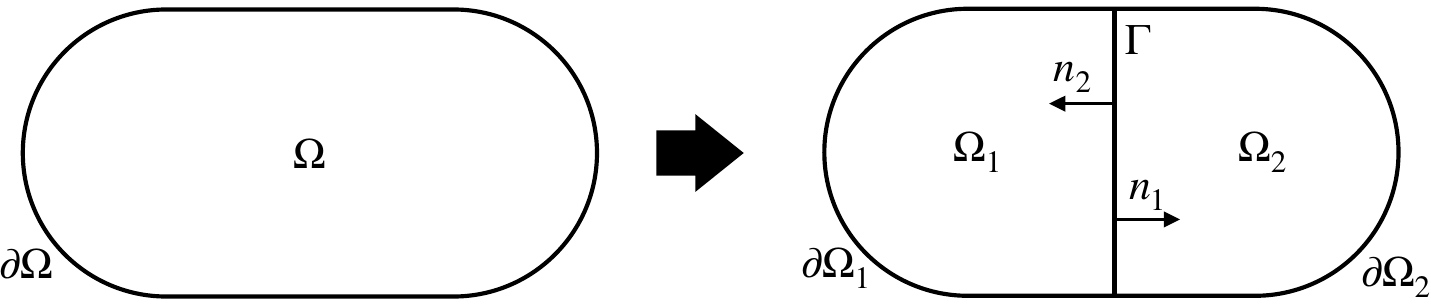}
  \caption{The domain $\Omega$ is decomposed into two nonoverlapping subdomains $\Omega_{1}$ and $\Omega_{2}$ with the interface $\Gamma$. Note that $n_{1}$ and $n_{2}$ are the outward normal vectors of $\Omega_{1}$ and $\Omega_{2}$, respectively, on $\Gamma$.}
  \label{fig:nonoverlap}
\end{figure}

Recently, several domain decomposition techniques have been developed to reduce the training time of PINNs.
DeepDDM~\cite{li2020deep} achieved training acceleration by directly applying the overlapping Schwarz method to PINN.
FBPINN~\cite{moseley2023finite} additionally designed a partition of unity function in the overlapping region to increase solution accuracy.
These methods have been extended to a two-level method using a coarse network~\cite{valentin2021coarse,yang2022additive}.
On the other hand, XPINN~\cite{jagtap2021extended} and cPINN~\cite{jagtap2020conservative} achieved training acceleration by adopting the idea of nonoverlapping domain decomposition in PINN.
However, since all of these methods require exchanging solution values or derivatives in each iteration, it is not easy to effectively reduce training time as the number of subdomains increases.
There are also domain decomposition type approaches for ELM and PIELM, so called Local ELM~(LocELM)~\cite{dong2021local} and Distributed PIELM (DPIELM)~\cite{dwivedi2020physics}, respectively, which are essentially the same.
In these methods, $C^k$ continuity conditions are imposed on the subdomain interfaces and the solution on each subdomain is produced by a local feed-forward neural network.
Each local neural network has small width in the hidden layers and parameters of the local neural network are assigned or optimized using the idea of ELM.
These methods showed that numerical errors typically decrease exponentially or nearly exponentially as the number of training parameters, or the number of training data points, or the number of subdomains increases.
However, they cannot be considered true DDMs because the parameters of the output layer are not optimized in parallel for each subdomain.

Motivated by substructuring methods of the nonoverlapping DDM, we propose a new nonoverlapping DDM for ELM.
In the proposed method, a local neural network is introduced for each subdomain, which is valid only in the subdomain,
and an auxiliary variable $u_{\Gamma}$ is also presented, which represents the interface values of the solution.
We construct a system for $u_{\Gamma}$ and the parameters of the output layers of local neural networks.
The system can be reduced to a Schur complement system by eliminating the parameters of the local neural networks.
The auxiliary variable $u_{\Gamma}$ is easy to compute because it solves the reduced system, and the parameters of the local neural networks can also be easily solved in the least squares sense.
Compared to LocELM, the proposed nonoverlapping DDM performs parallel computation after solving the reduced system.
This feature helps the proposed method effectively reduce the training time of ELM even as the number of subdomains increases.

It is known that the performance of ELMs depends greatly on the distribution used to sample the hidden layer parameters~\cite{dong2021local}.
A trial and error process to determine a suitable distribution would be expensive for large scale problems.
To address this issue, we propose an initialization scheme for one hidden layer networks that scales well with the number of neurons.

The rest of this paper is organized as follows.
In Section~\ref{Sec:Prelim}, we briefly introduce classical DDMs and ELM.
We present a new nonoverlapping DDM for ELM motivated by the substructuring method in Section~\ref{Sec:Ndd}.
In Section~\ref{Sec:Num}, a new initialization scheme is presented, followed by acceleration performance of the proposed nonoverlapping DDM for ELM applied to Poisson's equation and a plate bending problem.
We conclude this paper with remarks in Section~\ref{Sec:Conc}.

\section{Preliminaries}\label{Sec:Prelim}
\label{Sec:DDM}
We consider a general PDE with boundary conditions:
\begin{equation}
  \label{eqn:pde}
  \left\{\begin{aligned}
  \cL u = f \quad &\text{ in } \Omega,\\
  \mathcal{B}u = g \quad &\text{ on } \partial \Omega,
  \end{aligned}\right.
\end{equation}
where $\Omega \subset \R^{d}$ is a bounded domain, $\partial \Omega$ denotes the boundary of $\Omega$, $f, g$ are functions, $\cL$ is a differential operator defined for a function $u$,
and $\mathcal{B}$ represents boundary conditions, e.g. $\mathcal{B}u=u=g$ gives Dirichlet boundary conditions.
We assume that the problem~\eqref{eqn:pde} is well-posed so that the solution exists uniquely.

\subsection{Nonoverlapping domain decomposition methods}
DDM is known to be one of the successful fast numerical solvers for PDEs.
In particular, it can be implemented efficiently in parallel, saving time spent on solving large-scale problems.
There are two types of DDMs: overlapping and nonoverlapping methods, depending on how the domain is divided.

This paper focuses on nonoverlapping methods.
Assume that the differential operator $\cL$ is a second-order differential operator and that the domain $\Omega$ is divided into two nonoverlapping subdomains $\Omega_{1}$ and $\Omega_{2}$.
Then, under suitable assumptions on $f$ and boundaries of the subdomains, the solution of~\eqref{eqn:pde} is the same as the solution of the following system:
\begin{equation}
  \label{eqn:nonoverlap}
  \left\{\begin{aligned}
  \cL u_{1} &= f  & & \text{ in } \Omega_{1},\\
  u_{1} &= g  & & \text{ on } \partial \Omega_{1} \backslash \Gamma , \\
  \cL u_{2} &= f  & & \text{ in } \Omega_{2},\\
  u_{2} &= g  & & \text{ on } \partial \Omega_{2} \backslash \Gamma , \\
  u_{1} &= u_{2} & & \text{ on } \Gamma, \\
  \frac{\partial u_{1}}{\partial n_{1}}&=-\frac{\partial u_{2}}{\partial n_{2}}  & & \text{ on } \Gamma,
  \end{aligned}\right.
\end{equation}
where $\Gamma$ denotes the interface between $\Omega_{1}$ and $\Omega_{2}$.
In addition, $n_{1}$ and $n_{2}$ are the outward normal to $\Omega_{1}$ and $\Omega_{2}$, respectively.

Among several nonoverlapping DDMs, we first focus on substructuring methods to eliminate interior unknowns inside subdomains to create a reduced Schur complement system defined on the interface and solve it using an iterative method.
Specifically, let $Ku = F$ be the finite element discretization of~\eqref{eqn:pde}.
Then, we can rearrange the system into interior and interface parts:
\begin{align*}
  K= \begin{bmatrix}
    K_{II}^{1} & 0 &K_{I\Gamma}^{1} \\
    0 & K_{II}^{2} &K_{I\Gamma}^{2} \\
    K_{\Gamma I}^{1} & K_{\Gamma I}^{2} &K_{\Gamma\Gamma}
   \end{bmatrix},\quad
   u = \begin{bmatrix}
    u_{I}^{1} \\
    u_{I}^{2} \\
    u_{\Gamma}
    \end{bmatrix},\quad
    F = \begin{bmatrix}
      F_{I}^{1} \\
      F_{I}^{2} \\
      F_{\Gamma}
      \end{bmatrix},
\end{align*}
where $K_{\Gamma \Gamma} = K^{1}_{\Gamma \Gamma} + K^{2}_{\Gamma \Gamma}$ and $F_{\Gamma}=F^{1}_{\Gamma}+F^{2}_{\Gamma}$.
Using $u_{I}^{s} = K^{s^{-1}}_{I I}(F^{s}_{I} - K^{s}_{I \Gamma}u_{\Gamma})$ for $s=1,2$, we can eliminate the interior unknowns and get the reduced system
\begin{align}
  \label{eqn:subs}
  (S^{1}+S^{2})u_{\Gamma}=G^{1}_{\Gamma} + G^{2}_{\Gamma},
\end{align}
where $S^{s} = K^{s}_{\Gamma\Gamma}-K^{s}_{\Gamma I}K^{s^{-1}}_{I I}K^{s}_{I \Gamma}$ and $G^{s}_{\Gamma} = F^{s}_{\Gamma} - K^{s}_{\Gamma I}K^{s^{-1}}_{I I} F^{s}_{I}$.
The reduced system~\eqref{eqn:subs} can be easily solved by the conjugate gradient method and can even be accelerated by introducing a preconditioner~\cite{dohrmann2003preconditioner,mandel1993balancing}.
Additionally, after finding $u_{\Gamma}$, we can do one-time communication and then find the interior unknowns through parallel computation.

\subsection{Universal approximation theorem for extreme learning machines}
ELM was developed for single hidden layer feed-forward neural networks \cite{huang2011extreme, huang2006extreme}.
It presets the weight/bias coefficients in all the hidden layers of the network with random values, which remain fixed throughout the computation,
and uses a linear least squares method for training the weight coefficients in the output
layer of the neural network.
ELM is one example of the so-called randomized neural networks (see e.g.~\cite{igelnik1995stochastic, pao1994learning}).
The applications of ELM to function approximations and linear differential equations have been considered in
several recent works~\cite{dwivedi2020physics, liu2020legendre, panghal2021optimization} based on the following Universal Approximation Theorem.
\begin{theorem}[\cite{huang2006universal, igelnik1995stochastic}]
Given any bounded non-constant piecewise
continuous function $g\colon \mathbb{R} \rightarrow \mathbb{R}$, for any continuous target function $f$, there exists a sequence of single-hidden-layer feed-forward neural networks with $g$ as the
activation function, with its hidden layer coefficients randomly generated, and
with its output layer coefficients appropriately chosen, such that $\lim_{n\rightarrow\infty} \|f-f_n\| = 0$, where $n$ is the number of hidden-layer nodes and $f_n$ is the output of
the neural network.
\end{theorem}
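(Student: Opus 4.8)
The plan is to prove convergence by the incremental construction that underlies ELM, working in $L^2(K)$ for a compact domain $K$, where $\norm{\cdot}$ is the $L^2$ norm and the continuous target $f$ is square integrable. Write a hidden unit as $g_{w,b}(\bx)=g(w\cdot\bx+b)$ and let $\mathcal{G}=\{g_{w,b}:(w,b)\in\R^{d+1}\}$ be the dictionary of admissible nodes. Since $g$ is bounded and nonconstant piecewise continuous it is not a polynomial, so the classical density theorem for single-ridge activations guarantees that $\operatorname{span}\mathcal{G}$ is dense in $L^2(K)$; I would treat this deterministic fact as the engine supplying good approximating directions. Because the batch least squares solution over $n$ random nodes has error no larger than that of any particular linear combination of those same nodes, it suffices to exhibit one sequence of output coefficients driving the error to zero, and for this I would use the greedy/incremental scheme.

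First I would set up the greedy recursion. Put $f_0=0$, $e_0=f$, and suppose the random parameters $(w_i,b_i)$, $i<n$, have been drawn i.i.d.\ from a fixed continuous full-support law $\mu$, producing nodes $g_1,\dots,g_{n-1}$ and a residual $e_{n-1}=f-f_{n-1}$. Draw a fresh node $g_n$ and pick the output weight minimizing $\norm{e_{n-1}-\beta_n g_n}$, namely $\beta_n=\langle e_{n-1},g_n\rangle/\norm{g_n}^2$, which gives the exact recursion
\begin{equation}
\norm{e_n}^2=\norm{e_{n-1}}^2-\frac{\langle e_{n-1},g_n\rangle^2}{\norm{g_n}^2}.
\end{equation}
Thus $\norm{e_n}^2$ is nonincreasing and bounded below, so it converges to some $c\ge 0$, and the per-step decrements are summable with total at most $\norm{f}^2$.

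The core step is to show $c=0$ almost surely. The key structural point is that $g_n$ is independent of the $\sigma$-field $\mathcal{F}_{n-1}$ generated by the earlier draws, so conditionally on $\mathcal{F}_{n-1}$ the residual is fixed while $(w_n,b_n)$ is fresh, and the conditional expected decrement equals $\phi(e_{n-1})$ where $\phi(e)=\mathbb{E}_{\mu}\!\left[\langle e,g_{w,b}\rangle^2/\norm{g_{w,b}}^2\right]$. Using that $(w,b)\mapsto\langle e,g_{w,b}\rangle$ is continuous and that $\mu$ has full support, density of $\operatorname{span}\mathcal{G}$ forces $\phi(e)>0$ for every $e\neq 0$: if $\phi(e)=0$ then $\langle e,g_{w,b}\rangle=0$ for all $(w,b)$, hence $e\perp\overline{\operatorname{span}}\,\mathcal{G}=L^2(K)$ and $e=0$. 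Since $\sum_n\mathbb{E}[\phi(e_{n-1})]\le\norm{f}^2<\infty$, one has $\phi(e_{n-1})\to 0$ in $L^1$, so along a subsequence $\phi(e_{n_k})\to 0$ almost surely.

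The hard part will be upgrading $\phi(e_{n_k})\to 0$ to $\norm{e_{n_k}}\to 0$, which is exactly where the infinite dimensionality bites. Passing to a weak limit $e_{n_k}\rightharpoonup e_\ast$ and using Fatou shows $\phi$ is weakly lower semicontinuous, so $\phi(e_\ast)=0$ and $e_\ast=0$; but weak convergence to $0$ only yields $\norm{e_\ast}\le\liminf\norm{e_{n_k}}$ and does not by itself contradict $c>0$, since the residual could in principle keep a constant norm while drifting weakly to zero. Closing this gap requires a genuinely quantitative estimate: either a lower bound on $\phi(e_{n-1})$ that is uniform over the greedy iterates, exploiting that each $e_{n-1}$ lies in the structured affine set $f+\operatorname{span}\{g_1,\dots,g_{n-1}\}$, or a switch to the Monte--Carlo route of Igelnik--Pao, in which $f$ is first approximated by a function admitting an integral representation $\int F(w,b)\,g_{w,b}\,d\mu$ and then $f_n=\tfrac1n\sum_{i\le n}F(w_i,b_i)\,g_i$ converges almost surely by the strong law of large numbers for $L^2(K)$-valued random elements. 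Either way, once the residual is shown to vanish for the greedy coefficients, the batch least squares ELM error is no larger, yielding $\lim_{n\to\infty}\norm{f-f_n}=0$.
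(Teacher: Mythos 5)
First, note that the paper offers no proof of this theorem: it is imported verbatim with citations to \cite{huang2006universal} and \cite{igelnik1995stochastic}, so there is no in-paper argument to compare yours against; the relevant comparison is with those references.

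Your setup is sound --- the greedy recursion, the optimal $\beta_n$ giving $\|e_n\|^2=\|e_{n-1}\|^2-\langle e_{n-1},g_n\rangle^2/\|g_n\|^2$, the density of $\operatorname{span}\mathcal{G}$ forcing $\phi(e)>0$ for $e\neq 0$, and the reduction from least squares to any admissible coefficient choice are all correct. But the proof stops exactly at the theorem's actual content. You establish only that $\phi(e_{n_k})\to 0$ along a subsequence and that $\phi$ vanishes only at $0$; since $\phi$ is not bounded below by any fixed multiple of $\|e\|^2$ on the unit sphere of $L^2(K)$ (the dictionary is not a frame --- $\inf_{\|e\|=1}\phi(e)=0$ in infinite dimensions), this does not yield $\|e_{n_k}\|\to 0$, and you say so yourself. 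Neither of the two repairs you name is carried out. The first (a uniform lower bound on $\phi$ over the greedy iterates) is not obviously available: the residuals live in $f+\operatorname{span}\{g_1,\dots,g_{n-1}\}$ for \emph{growing} $n$, so no fixed finite-dimensional compactness argument applies, and your own weak-convergence discussion shows why the soft argument fails. The second (the Igelnik--Pao route) would close the proof, but it requires real work you have not done: showing that a continuous $f$ on a compact set is $\varepsilon$-approximable by a function with an integral representation $\int F(w,b)\,g_{w,b}\,d\mu$ with $\mathbb{E}_\mu\big[\|F(w,b)g_{w,b}\|^2\big]<\infty$, invoking the strong law of large numbers for $L^2(K)$-valued random elements, and combining the two errors. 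As written, the proposal is an honest and well-organized account of why the theorem is plausible and where the difficulty sits, but it is not a proof: the decisive step is explicitly left open.
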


\section{Schur complement system for ELM}
\label{Sec:Ndd}
In this section, we present a new nonoverlapping DDM inspired by Schur complement systems in classical DDMs.

Let $u_{\theta}$ be a neural network solution for~\eqref{eqn:pde}.
Then, the solution can be represented by
\begin{equation*}
  \displaystyle u_{\theta}(\bx) = \sum_{j=1}^{n} c_{j} \phi_{j}(\bx), \quad \forall \bx \in \Omega,
\end{equation*}
where $n$ is the number of neurons in the last layer and $\{ \phi_{j}(\bx) \}$ are outputs of the hidden layer at a given $\bx$ with parameter $\theta$.
Let $N$ be the number of subdomains, and let the domain $\Omega$ be decomposed into
\begin{equation*}
  \displaystyle \Omega = \cup_{s=1}^{N} \Omega_{s},
\end{equation*}
where $\{\Omega_{s}\}$ are nonoverlapping subdomains.
For each subdomain $\Omega_{s}$, we define a local neural network to produce $u_{s}(\bx)$ as
\begin{equation*}
  \displaystyle u_{s}(\bx)=\sum_{j=1}^{n_{N}}c^{s}_{j} \phi^{s}_{j}(\bx), \quad \forall \bx \in \Omega_{s},
\end{equation*}
where $n_{N}:= n / N$, $\phi^{s}_{j}(\bx)$ is the$j$-th output of the hidden layer at a  given $\bx$, and $\theta^{s}$ is the parameter for $\{\phi^{s}_{j}\}$.

For simplicity, we assume $N=2$ and $\cL$ is a second-order differential operator.
Then the solution of~\eqref{eqn:pde} is the same as the solution of~\eqref{eqn:nonoverlap}.

Let $\{\bx^{s}_{i}, \bx^{s}_{\Gamma}, \bx^{s}_{b}\}$ be the interior, interface, and boundary points, respectively:
\begin{equation*}
       \bx^{s}_{i}=\begin{bmatrix}
              x^{s}_{i,1} \\
              x^{s}_{i,2} \\
              \vdots \\
              x^{s}_{i, m^{s}_{i}}
       \end{bmatrix},\quad
       \bx^{s}_{\Gamma}=\begin{bmatrix}
              x^{s}_{\Gamma,1} \\
              x^{s}_{\Gamma,2} \\
              \vdots \\
              x^{s}_{\Gamma, m^{s}_{\Gamma}}
       \end{bmatrix},\quad
       \bx^{s}_{b}=\begin{bmatrix}
              x^{s}_{b,1} \\
              x^{s}_{b,2} \\
              \vdots \\
              x^{s}_{b, m^{s}_{b}}
       \end{bmatrix},
\end{equation*}
where $\bx^{s}_{i} \in \R^{m^{s}_{i} \times d}, \bx^{s}_{\Gamma} \in \R^{m^{s}_{\Gamma} \times d}$, and $\bx^{s}_{b} \in \R^{m^{s}_{b} \times d}$. Note that $m_{i}^{s}, m_{\Gamma}^{s}$, and $m_{b}^{s}$ denote the number of interior, interface, and boundary points, respectively.
Also, we introduce an auxiliary variable $u_{\Gamma}$ satisfying
\begin{equation*}
  u_{1} = u_{2} = u_{\Gamma} \text{ on } \Gamma.
\end{equation*}
Then, we discretize \eqref{eqn:nonoverlap} as follows:
\begin{equation}
  \label{eqn:nonoverlap_discrete}
  \begin{bmatrix}
    K^{1} & 0 & B^{1} \\
    0 & K^{2} & B^{2} \\
    A^{1} & A^{2} & 0
\end{bmatrix}
\begin{bmatrix}
    c^{1} \\
    c^{2} \\
    u_{\Gamma}
\end{bmatrix}
=
\begin{bmatrix}
    F^{1} \\
    F^{2} \\
    0
\end{bmatrix},
\end{equation}
where
\begin{align*}
  K^{s} &= \begin{bmatrix}
    \cL \phi^{s}_{1}(\mathbf{x}^{s}_{i}) & \cdots & \cL \phi^{s}_{n_{N}}(\mathbf{x}^{s}_{i}) \\
    \phi^{s}_{1}(\mathbf{x}^{s}_{b}) & \cdots & \phi^{s}_{n_{N}}(\mathbf{x}^{s}_{b}) \\
    \phi^{s}_{1}(\mathbf{x}^{s}_{\Gamma}) & \cdots & \phi^{s}_{n_{N}}(\mathbf{x}^{s}_{\Gamma})
  \end{bmatrix},
  \quad
  A^{s} = \begin{bmatrix}
    \frac{\partial \phi^{s}_{1}}{\partial n_{s}}(\mathbf{x}^{s}_{\Gamma})
    & \cdots
    & \frac{\partial \phi^{s}_{n_{N}}}{\partial n_{s}}(\mathbf{x}^{s}_{\Gamma})
  \end{bmatrix},
  \\
  c^{s} &= \begin{bmatrix}
    c^{s}_{1} \\
    \vdots \\
    c^{s}_{n_{N}}
  \end{bmatrix},
  \quad
  B^{s} = \begin{bmatrix}
    0 \\
    0 \\
    -I
  \end{bmatrix}, \quad
  F^{s} = \begin{bmatrix}
    f(\mathbf{x}^{s}_{i}) \\
    g(\mathbf{x}^{s}_{b}) \\
    0
  \end{bmatrix},\quad \text{for } s=1,2.
\end{align*}
 
\begin{remark}
    When there are more than two subdomains,
    $\frac{\partial \phi^{s}_{j}}{\partial n_{s}}(x_{\Gamma, j}^s)$ can become ambiguous when $x_{\Gamma, j}^s$ is a common corner point of the subdomains.
    In this case, we add a row to $A^s$ for each edge to which $x_{\Gamma, j}^s$ belongs, where $n_s$ is the normal direction to that edge.
  \end{remark}

Typically, the total number of training points is larger than that of neurons in the last layer, i.e.,
\begin{equation*}
  \sum_{s=1}^{N} \left(m^{s}_{i}+m^{s}_{b}+m^{s}_{\Gamma}\right) > n
\end{equation*}
and we solve~\eqref{eqn:nonoverlap_discrete} by the least squares method.
In the following proposition, if the local neural network is a 1-hidden layer network, we can show that the least squares system for~\eqref{eqn:nonoverlap_discrete} has a unique solution with probability 1.

\begin{proposition}
  Assume that, in each subdomain $\Omega_s$, the interior, boundary, and interface points $\{\bx^{s}_{i}, \bx^{s}_{b}, \bx^{s}_{\Gamma}\}$ are distinct samples such that $m_i^s \ge n_N$.
  In addition, let $u_s$ be the output of the local neural network with a 1-hidden layer, i.e.,
  \begin{equation*}
    u_{s}(\bx) = \sum_{j=1}^{n_{N}}c^{s}_{j} \sigma (w_{j}^{s} \cdot \bx + b_{j}^{s}), \quad \forall \bx \in \Omega_s,
  \end{equation*}
  where $\sigma$ is a $C^{\infty}$ activation function of which all derivatives are independent of each  other and $\{ w_{j}^{s} \}$ and $\{ b_{j}^{s} \}$ are randomly chosen from continuous probability distributions on $\R^{d}$ and $\R$, respectively.
  Then the matrix in~\eqref{eqn:nonoverlap_discrete} has full column rank with probability 1 so that the least squares system for~\eqref{eqn:nonoverlap_discrete} has a unique solution with probability 1.

\end{proposition}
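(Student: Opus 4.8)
The plan is to show that the coefficient matrix in \eqref{eqn:nonoverlap_discrete} has trivial kernel with probability 1, and to reduce this entirely to a full-column-rank statement about a single ``interior'' block. Suppose $(c^1,c^2,u_\Gamma)$ lies in the kernel. Splitting each block equation $K^s c^s + B^s u_\Gamma = 0$ into its interior, boundary, and interface rows, and using that the only nonzero block of $B^s$ is the $-I$ sitting in the interface rows, the interior rows read $M^s c^s = 0$, where $M^s := [\cL\phi^s_1(\bx^s_i)\ \cdots\ \cL\phi^s_{n_N}(\bx^s_i)]$ is the $m^s_i \times n_N$ top block of $K^s$. If each $M^s$ has full column rank, then $c^1 = c^2 = 0$, and the interface rows then force $u_\Gamma = [\phi^s_j(\bx^s_\Gamma)]\,c^s = 0$, so the kernel is trivial. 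Hence it suffices to prove that $M^s$ has rank $n_N$ with probability 1; note that this uses only the interior block, and the hypothesis $m^s_i \ge n_N$ guarantees $M^s$ has enough rows to do so.

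To establish $\dim\ker M^s = 0$ almost surely, I would argue column by column. Fix $s$ and suppress the superscript. Proceeding by induction on $k$, assume the first $k-1$ columns of $M$ are linearly independent with probability 1, condition on that event, and let $V \subset \R^{m_i}$ be their $(k-1)$-dimensional span. Since $k-1 \le n_N - 1 < m_i$, the orthogonal complement $V^{\perp}$ contains a nonzero vector $\gamma$, and the $k$-th column lies in $V$ only if $\gamma^{\top}$ annihilates it. Thus it is enough to show that, for fixed distinct points $\{\bx_\ell\}$ and a fixed nonzero $\gamma$, the random quantity
\[
  \Phi(w,b) \;=\; \sum_{\ell=1}^{m_i} \gamma_\ell\, \cL\big[\sigma(w\cdot\bx_\ell + b)\big]
\]
vanishes with probability $0$ when $(w,b)=(w_k,b_k)$ is drawn from its continuous distribution. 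This reduces the proposition to a single statement: the zero set of $\Phi$ has Lebesgue measure zero.

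The hard part is exactly this last step, and it is where the hypothesis on $\sigma$ enters. First I would show $\Phi \not\equiv 0$. Fix a generic $w$ so that the scalars $y_\ell := w\cdot\bx_\ell$ are pairwise distinct, and view $b \mapsto \Phi(w,b)$ as a smooth function. Differentiating in $b$ and using that $\cL$ acts on $\sigma(w\cdot\bx+b)$ only through the derivatives of $\sigma$ (for constant-coefficient $\cL$, $\partial_b$ commutes with $\cL$; in general one differentiates termwise), each derivative vector $\partial_b^{\,p}\big[\cL\sigma(w\cdot\bx_\ell+b)\big]_\ell$ is a linear combination of the vectors $\big[\sigma^{(q)}(y_\ell+b)\big]_\ell$. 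By the assumption that $\sigma,\sigma',\sigma'',\dots$ are linearly independent, together with the distinctness of the $y_\ell$, a Vandermonde/Taylor-expansion argument shows these vectors span all of $\R^{m_i}$; hence no nonzero $\gamma$ can be orthogonal to every one of them, which forbids $\Phi$ from vanishing on any interval and in particular gives $\Phi \not\equiv 0$. Finally, since the standard ELM activations are real-analytic, $\Phi$ is a nonzero real-analytic function of $(w,b)$, so its zero set has measure zero and $\Pr(\Phi = 0) = 0$; the induction then closes and $M^s$ has full column rank with probability 1. I expect the only delicate point to be this upgrade from \emph{not identically zero} to \emph{measure-zero zero set}: a merely $C^\infty$ function can vanish on a set of positive measure, so the argument genuinely relies on the analytic (equivalently, the derivative-independence) nondegeneracy of $\sigma$ rather than smoothness alone.
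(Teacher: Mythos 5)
Your proposal rests on the same key lemma as the paper --- that the interior block $\{\cL\phi^s_j(\bx^s_i)\}_{j=1}^{n_N}$ is linearly independent with probability 1 --- but treats both ends of the argument differently. On the reduction side you are actually more careful than the paper: the paper argues that $K=\diag(K^1,\dots,K^N)$ and $B$ each have full column rank and concludes that the whole matrix does, which as stated is not a valid inference for a general block matrix $\bigl[\begin{smallmatrix} K & B\\ A & 0\end{smallmatrix}\bigr]$; your kernel chase, which uses the specific structure $B^s=[0;0;-I]$ to read off $M^s c^s=0$ from the interior rows and then $u_\Gamma=[\phi^s_j(\bx^s_\Gamma)]c^s=0$ from the interface rows, is the right way to close this step. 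On the key lemma, the paper simply defers to \cite[Theorem 2.1]{huang2006extreme}, whereas you sketch a proof from scratch in the Tamura--Tateishi/Huang style. Your sketch is fine in spirit, but two points need care. First, when $\cL$ contributes several terms along the ray (say $a_2\sigma''+a_1\sigma'+a_0\sigma$), orthogonality of $\gamma$ to all $b$-derivatives of $\cL\sigma(w\cdot\bx_\ell+b)$ yields only a linear recurrence among the scalars $\gamma\cdot[\sigma^{(q)}(y_\ell+b)]_\ell$, not the vanishing of each one, so the appeal to the independence of $\sigma,\sigma',\dots$ requires an extra step. Second, as you yourself flag, the upgrade from ``not identically zero'' to ``measure-zero zero set'' needs real-analyticity, which is strictly stronger than the stated $C^\infty$ hypothesis; derivative-independence is not equivalent to it. Neither caveat is resolved by the paper's one-line citation either, so your version is, if anything, more explicit about where the genuine content of the proposition lies.
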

\begin{proof}
  Following the proof of~\cite[Theorem 2.1]{huang2006extreme}, we can easily prove that the set
  $\{ \cL \phi_{j}^{s}(\bx_{i}^{s}) \}_{j=1}^{n_N}$ is a linearly independent set with probability 1.
  This means that $K^{s}$ is a full column rank matrix with probability 1.
  Then, the block diagonal matrix $K:=\diag(K^{1}, \cdots, K^{N})$ also has full column rank with probability 1. Furthermore, the matrix $B:=\left[ B^{1}, \cdots, B^{N} \right]^{T}$ is obviously a full column rank matrix. Hence, the matrix in~\eqref{eqn:nonoverlap_discrete} has full column rank with probability 1 so that the least squares system for~\eqref{eqn:nonoverlap_discrete} has a unique solution with probability 1.

\end{proof}

The system~\eqref{eqn:nonoverlap_discrete} can be written as
\begin{equation}\label{eqn:discrete-simple}
\begin{bmatrix}
    K & B \\
    A  & 0
\end{bmatrix}
\begin{bmatrix}
    c \\
    u_{\Gamma}
\end{bmatrix}
=
\begin{bmatrix}
    F \\
    0
\end{bmatrix}
\end{equation}
where
\begin{align*}
K=\begin{bmatrix}
    K^1 & 0 \\
    0  & K^2
\end{bmatrix},\quad
A=\begin{bmatrix}
    A^1 & A^2
\end{bmatrix}, \quad
B=\begin{bmatrix}
    B^1 \\
    B^2
\end{bmatrix}, \quad
F=\begin{bmatrix}
    F^1 \\
    F^2
\end{bmatrix}, \quad\text{and }
c=\begin{bmatrix}
    c^1 \\
    c^2
\end{bmatrix}.
\end{align*}
Multiplying
\begin{align*}
\begin{bmatrix}
    I &  0 \\
    -AK^+  & I
\end{bmatrix}\quad (K^+=(K^TK)^{-1}K^T)
\end{align*}
to \eqref{eqn:discrete-simple}, we obtain
\begin{equation}\label{eqn:eliminated}
\begin{bmatrix}
    K & B \\
    0  & -AK^+B
\end{bmatrix}
\begin{bmatrix}
    c \\
    u_{\Gamma}
\end{bmatrix}
=
\begin{bmatrix}
    F \\
    -AK^+F
\end{bmatrix}.
\end{equation}
Instead of solving the least squares system of~\eqref{eqn:nonoverlap_discrete}, we solve the least squares system of~\eqref{eqn:eliminated}
\begin{align*}
\begin{bmatrix}
    K^T &  0 \\
    B^T  & -B^TK^{+T}A^T
\end{bmatrix}
\begin{bmatrix}
    K & B \\
    0  & -AK^+B
\end{bmatrix}
\begin{bmatrix}
    c \\
    u_{\Gamma}
\end{bmatrix}
=
\begin{bmatrix}
    K^T &  0 \\
    B^T  & -B^TK^{+T}A^T
\end{bmatrix}
\begin{bmatrix}
    F \\
    -AK^+F
\end{bmatrix}.
\end{align*}
That is, $(c, u_\Gamma)$ is the solution of
\begin{equation}\label{LS_system}
\left\{
\begin{aligned}
K^TKc+K^TBu_\Gamma & = K^TF, \\
B^TKc+B^T(I+K^{+T}A^TAK^+)Bu_\Gamma & =B^T(I+K^{+T}A^TAK^+)F.
\end{aligned}
\right.
\end{equation}
So, by eliminating $c$ from~\eqref{LS_system}, we obtain the Schur complement system
\begin{equation}~\label{eqn:schur}
B^T(I+K^{+T}A^TAK^+ -KK^+)Bu_\Gamma = B^T(I+K^{+T}A^TAK^+ -KK^+)F.
\end{equation}

Since the system~\eqref{eqn:schur} is SPD, we use the conjugate gradient method~(CGM)~\cite{hestenes1952methods} to solve the system.
After finding $u_{\Gamma}$, in each subdomain $\Omega_{s}$, we optimize the parameters of the output layer of each local neural network by solving $K^{s^{T}}K^{s}c^{s}=K^{s^{T}}(F^{s}-B^{s}u_\Gamma)$.
The whole process is described in Algorithm~\ref{alg:iterative}.

\begin{algorithm}
  Set $\theta^{s}$ randomly. \\
                \For{$s=1, \cdots, N$ in parallel}{
                       Construct $K^{s}, A^{s}, B^{s}$, and $F^{s}$
                }
                Solve $B^T(I+K^{+T}A^TAK^+ -KK^+)Bu_\Gamma = B^T(I+K^{+T}A^TAK^+ -KK^+)F$ \\
                
                \For{$s=1, \cdots, N$ in parallel}{
                  Solve $K^{s^{T}}K^{s}c^{s}=K^{s^{T}}(F^{s}-B^{s}u_\Gamma)$
                
                }
  \caption{Nonoverlapping DDM for ELM}
  \label{alg:iterative}
\end{algorithm}

\section{Numerical experiments}
\label{Sec:Num}
In this section, we present numerical results of the proposed nonoverlapping DDM for ELM.
We conduct experiments on various PDEs to demonstrate the performance of the proposed method.

The neural network used for ELM is a 1-hidden layer network with $n=2^{16}=65{,}536$ neurons.
The number of neurons in each local neural network is $n / N$ where $N$ is the number of subdomains.
The $\tanh$ function is used for the activation function and the total number of training points for ELM is $m=(2^{8}+ 1)^{2}=66{,}049$, which are distributed to each subdomain.
CGM is used to solve the reduced system for $u_{\Gamma}$, stopping when the relative residual is smaller than $10^{-9}$.

All algorithms are implemented using Pytorch~\cite{paszke2019pytorch} with mpi4py~\cite{dalcin2021mpi}.
Except for the $16\times16$ case, all experiments were conducted on a single node equipped with two Intel Xeon Gold 6448H (2.4GHz 32c) processors, 256GB RAM, and each process was allocated $64 / N$ cores.
For the case of $16\times16$, two nodes with two Intel Xeon Gold 6448H and three nodes with two Intel Xeon Gold 6348 (2.6GHz 28c) with 256GB RAM each were used, and each process was allocated one core.
The cluster is connected by a 100Gbps Infiniband network and runs on the CentOS 7 operating system.

\begin{remark}
  There is a reason why we used CPUs, not GPUs.
  Solving the least squares system for ELM with a lot of neurons and training points on GPUs leads to out-of-memory issues.
\end{remark}

\subsection{Initialization of parameters of the hidden layers}
\label{Sec:interface}
In Algorithm~\ref{alg:iterative}, we solve the system~\eqref{eqn:schur} using CGM which requires matrix-vector multiplications involving $B^T(I+K^{+T}A^TAK^+ -KK^+)B$.
When a 1-hidden layer network is applied to the local neural network, the output of the $j$-th neuron for a given $\bx$ can be described as
\begin{equation*}
  \phi^{s}_{j}(\bx)=\sigma \left( w_{j}^{s}\cdot \bx + b_{j}^{s} \right) \text{ where } w^{s}_{j} \in \R^{d} \text{ and } b^{s}_{j} \in \R.
\end{equation*}
To satisfy the basic concept of ELMs, the initialization of $\{ w^{s}_{j} \}$ and $\{ b^{s}_{j} \}$ must meet two conditions.
First, the span of $\{\phi^{s}_{j}\}_{j=1}^{n_N}$ should cover the solution space on each subdomain.
Second, $\{\phi^{s}_{j}\}_{j=1}^{n_N}$ should be sufficiently independent.
A certain way to achieve the first condition would be that for given $w\in\R^d$, $b\in\R$ and $\epsilon>0$, the probability that there is some $\phi^s_j$ for which
$\norm{w - w_j^s} < \epsilon$ and $\lvert b - b_j^s\rvert < \epsilon$ tends to $1$ as the number of neurons increases.
However, in such a situation, many outputs of neurons become redundant, violating the second condition.
As a compromise, we require that the probability merely be a number close to $1$ for some fixed $\epsilon$.

Let $\bar{B}_r(\Omega_s) = \left\{\xi\mid \mathrm{dist}(\xi, \Omega_s) \leq r(n_N)\right\}$.
Suppose that for some functions $l, r \colon \mathbb{N}\rightarrow \mathbb{R_+}$ we choose $w^{s}_{j}$ and $b^{s}_{j}$ in the following way:
\begin{equation*}
\label{eqn:init_be}
\begin{split}
  w^{s}_{j} &\sim U([-l(n_N), l(n_N)]^d), \\
  b^{s}_{j} &= -w^{s}_{j} \cdot \xi^{s}_{j} \quad \text{ for } \xi^{s}_{j} \sim U(\bar{B}_{r(n_N)}(\Omega_s)),
\end{split}
\end{equation*}
where $U$ denotes the uniform distribution.
It is easy to see that the first condition above requires
\begin{equation*}
  l(n_N), r(n_N) \rightarrow \infty \text{ as } n_N \rightarrow \infty.
\end{equation*}

In practice, however, since the bias $b^{s}_{j}$ determines the geometric position of $\phi_{j}^{s}$,
the neuron outputs corresponding to $\xi^s_j$ far from $\Omega_s$ are mostly linear in the subdomain for activation functions such as $\tanh$.
This causes $K^s$ to become ill-conditioned as $r(n_N)$ increases.
Based on experiments using $\tanh$ as activation, $r \equiv \mathrm{diam}(\Omega_s) / 2$ was found to be sufficient for the Poisson equation and the plate bending problem.
We now continue the discussion assuming $r$ is fixed.

For given $\hat{w}\neq 0$ and $\hat{b}$, let
\begin{align*}
  V_\epsilon(\hat{w}, \hat{b}) &= \left\{(w, \xi) \,\middle|\, \norm{w - \hat{w}} < \epsilon,\, \lvert \hat{b} - w\cdot \xi\rvert < \epsilon,\, \mathrm{dist}(\xi, \Omega_s) < r\right\},\\
  W_\epsilon(\hat{w}, \hat{b}) &= \left\{(w, \xi) \,\middle|\, \norm{w - \hat{w}} < \epsilon,\, \lvert \hat{b} - \hat{w}\cdot \xi\rvert < \epsilon,\, \mathrm{dist}(\xi, \Omega_s) < r\right\}.
\end{align*}
The probability that $(w_j^s, b_j^s)$ satisfies $\norm{\hat{w} - w_j^s} < \epsilon$ and $\lvert \hat{b} - b_j^s\rvert < \epsilon$ is the volume of $V_\epsilon(\hat{w}, \hat{b})$ divided by the volume of the set of all possible $(w, \xi)$.
In order to bound the volume of $V_\epsilon$, the volume of $W_\epsilon(\hat{w}, \hat{b})$ is employed as follows:
\begin{equation*}
  W_{\epsilon/(1+R)}(\hat{w}, \hat{b}) \subseteq V_\epsilon(\hat{w}, \hat{b}) \subseteq  W_{(1+R)\epsilon}(\hat{w}, \hat{b}),
\end{equation*}
where $R$ bounds the 2-norm of $\bar{B}_r(\Omega_s)$.
Since $W_\epsilon(\hat{w}, \hat{b})$ is the Cartesian product of an $\epsilon$-ball about $\hat{w}$ and
the intersection of $\left\{\xi \,\middle|\, \norm{\hat{b} - \hat{w}\cdot \xi} < \epsilon\right\}$, an $\epsilon/\norm{\hat{w}}$-neighborhood of a hyperplane, with $\bar{B}_r(\Omega_s)$,
its volume is proportional to $\epsilon^d \cdot \mathrm{diam}(\bar{B}_r(\Omega_s))^{d-1}\epsilon / \norm{\hat{w}}$.

The probability that there exists $j$ such that $\norm{\hat{w} - w_j^s} < \epsilon$ and $\lvert \hat{b} - b_j^s\rvert < \epsilon$ is
\begin{equation}
  \label{eqn:prob}
  1 - \left(1 - \frac{\mathrm{vol}(V_\epsilon(\hat{w}, \hat{b}))}{\mathrm{vol}([-l(n_N), l(n_N)]^d\times \bar{B}_r(\Omega_s))}\right)^{n_N}.
\end{equation}
As $n_N\rightarrow \infty$, the liminf of~\eqref{eqn:prob} is non-zero if and only if
\begin{equation*}
  \frac{\mathrm{vol}([-l(n_N), l(n_N)]^d\times \bar{B}_r(\Omega_s))}{\mathrm{vol}(V_\epsilon(\hat{w}, \hat{b}))} = O(n_N).
\end{equation*}
Since
\begin{equation*}
  \frac{\mathrm{vol}([-l(n_N), l(n_N)]^d\times \bar{B}_r(\Omega_s))}{\mathrm{vol}(V_\epsilon(\hat{w}, \hat{b}))}
  \approx \frac{l(n_N)^d \cdot \mathrm{diam}(\bar{B}_r(\Omega_s))^d}{\epsilon^d \cdot \mathrm{diam}(\bar{B}_r(\Omega_s))^{d-1}\epsilon / \norm{\hat{w}}},
\end{equation*}
with the proportionality constant depending on $d$, $R$, and the shape of $\Omega_s$, we have
\begin{equation}
  \label{eqn:l}
  l(n_N) = O\left(\frac{\epsilon^{1+1/d}}{\norm{\hat{w}}^{1/d}\mathrm{diam}(\bar{B}_r(\Omega_s))^{1/d}}n_N^{1/d}\right).
\end{equation}

Because $\norm{\hat{w}}^{1/d}$ is in the denominator, the suggested initialization
will always cause the limsup of the probability in~\eqref{eqn:prob} to tend to zero as $\norm{\hat{w}}$ becomes large,
unless $l(n_N) = o(n_N^{1/d})$, which would violate the second condition.
However, we can fix a bound on $\norm{\hat{w}}$ based on the steepness of the solution, where large $\norm{\hat{w}}$ corresponds to steep slopes in the solution.
Since $r$ is fixed and~\eqref{eqn:l} is not derived for $N \to \infty$, the diameter of $\bar{B}_r(\Omega_s)$ is absorbed into the proportionality constant, yielding $l = O(n_N^{1/d})$.

\begin{remark}
  We may remove the dependency of $l(n_N)$ on $\norm{\hat{w}}$ if $w_j^s$ is sampled using the probability density function $f(w)\approx \norm{w}\chi_{[-l(n_N),l(n_N)]^d}(w)$, where $\chi$ is the characteristic function.
  In this case, $l(n_N) = O(n_N^{1/d+1})$.
  We did not observe significant performance differences between this modified distribution and the proposed initialization method~\eqref{eqn:l} in experiments on the 2D Poisson equation with exact solutions $\sin(16\pi x)\sin(16\pi y)$ and $\sin(32\pi x)\sin(32\pi y)$.
  Nevertheless, we expect solutions requiring even larger weights to benefit from the modified distribution.
\end{remark}

To determine the proportionality constant, we first scale the subdomain $\Omega_s$ to the unit square.
Combining $n_{N} = n/N$ with the fact that scaling performs a $N^{1/d}$ multiple, the initialization for 2D problems is given by
\begin{equation}
  \label{eqn:init}
  \begin{split}
    w^{s}_{j} &\sim U([-l, l]^2)\quad \text{ for } l = c\sqrt{n}, \\
    b^{s}_{j} &= -w^{s}_{j} \cdot \xi^{s}_{j} \quad \text{ for } \xi^{s}_{j} \sim U(\bar{B}_r(\Omega_{s})).
  \end{split}
\end{equation}
The constant $c$ is obtained empirically; we used $c=1/8$
for the Poisson equation and $c=1/16$ for the plate bending problem, respectively.
We set $r= \mathrm{diam}(\Omega_s)/2$ for both problems.
Note that suitable $c$ and $r$ may be determined through experiments on smaller problems to reduce costs, and then applied to larger ones.

The widely used He initialization~\cite{he2015delving} and Glorot initialization~\cite{glorot2010understanding} adjust $w^{s}_{j}$ by $1/\sqrt{n}$ and set $b^{s}_{j}=0$ to match the variance of the inputs and outputs of each layer.
On the other hand, our initialization described in~\eqref{eqn:init} sets the bias according to the geometric information of each subdomain.
Applying the He or Glorot initialization to $w^{s}_{j}$ results in a smaller sampling range of $w^{s}_{j}$ as $n$ increases.
As a result, all basis functions $\{\phi^{s}_{j}\}$ have similar shapes at similar locations, which makes $K^{s}$ ill-conditioned and reduces their approximation quality.

\begin{table}
  \caption{Square root of the mean square error between computed $u_{\Gamma}$ and the exact value of 2-D Poisson problem. $n$ and $N$ denote the number of neurons and subdomains, respectively. Note that the entries of weight $w_{j}^{s}$ are sampled from the interval $[-l,l]$.}
  \label{tab:initial}
  \centering
  \begin{tabular}{@{}c|ccc@{}}
         \toprule
         \multirow{2}{*}{Method} & \multicolumn{3}{c}{N} \\ \cmidrule(l){2-4}
          & $2 \times 2$ & $4 \times 4$ & $8 \times 8$ \\ \midrule
         He~$(l=\sqrt{2/n_N})$ & 8.4230e-03 & 1.7173e-04 & 1.1454e-05 \\
         Manual~$(l=1)$ & 4.9389e-10 & 4.0850e-09 & 6.5790e-08 \\
         Suggested~$(l=\sqrt{n/64})$ & \textbf{2.8926e-10} & \textbf{7.5946e-10} & \textbf{2.9894e-09} \\ \bottomrule
  \end{tabular}
\end{table}

\cref{tab:initial} shows that our initialization provides better results than the commonly used He initialization and manual initialization.
Therefore, all experiments were conducted using the initialization method we introduced.

\subsection{Application to PDEs}
We now look at the performance of the proposed non-overlapping DDM for ELM.
All reported computation times for solving the model equations are averages of $10$ runs.
The finite element method~(FEM) solution on a $4096\times4096$ grid is employed in cases where the exact solution is unknown.
Note that the solution space is discretized by conforming P1 elements.
To solve the system derived from FEM, we used CGM with stop condition $10^{-9}$ on the relative norm of the residual.

\subsubsection{2-D Poisson equation}
We first consider the following 2-D Poisson problem:
\begin{equation}
  \label{eqn:poisson}
  \left\{\begin{aligned}
  -\Delta u &= f &&\text{ in } \Omega=(0,1)^{2}, \\
  u &= g &&\text{ on } \partial \Omega
  \end{aligned}\right.
\end{equation}
with the exact solution $u(x,y) = \sin (2\pi x) e^y$.
\cref{tab:poisson1} shows the relative $L^{2}$ and $H^{1}$ errors of the proposed DDM solution.
Note that the result of $N = 1 \times 1$ denotes the ELM solution trained on the whole domain $\Omega$.
The relative errors of the solutions generated by Algorithm~\ref{alg:iterative} are slightly larger than those of the ELM solution, but the wall-clock time is significantly smaller.

\begin{table}
  \caption{Relative $L^{2}$ and $H^{1}$ errors of the solutions of 2-D Poisson equation with the exact solution $u(x,y)=\sin(2\pi x)e^y$. $N$ denotes the number of subdomains and the solutions of the case of $N=1 \times 1$ is the ELM solution. For $N>1$, the solutions are computed by Algorithm~\ref{alg:iterative}.}
  \label{tab:poisson1}
  \centering
  \begin{tabular}{cccc}
         \toprule
         $N$ & $L^{2}$ rel. error & $H^{1}$ rel. error & Wall-clock time~(sec) \\
         \midrule \midrule
         $1 \times 1$ & 8.1550e-11 & 1.6095e-09 & 8617.13\\
         \midrule
         $2 \times 2$ & 1.0748e-10 & 1.0590e-09 & 315.83 \\
         $4 \times 4$ & 1.6192e-08 & 3.2441e-08 & 19.46 \\
         $8 \times 8$ & 2.6392e-08 & 8.1948e-08 & 3.48\\
         $16 \times 16$ & 1.0212e-07 & 1.5199e-07 & 10.43\\
         \bottomrule
  \end{tabular}
\end{table}
In addition, we test a more oscillatory case where $g\equiv 0$ and
\begin{equation}
  \label{eqn:grf}
\begin{gathered}
  f(\mathbf{x}) = \sum_{i=1}^{256} a_i \sin(w_i\cdot \mathbf{x} + b_i),\\
  a_i \sim \mathcal{N}(0, \alpha^4/256), \quad w_i \sim \mathcal{N}(0, \alpha^2I), \quad b_i \sim U((0, 2\pi)).
\end{gathered}
\end{equation}
In this formulation, $f$ approximates a Gaussian random field of varying oscillation depending on $\alpha$.
In this case, since the exact solution is unknown, the FEM solution is used as a reference.
A graphical depiction of the right hand sides and solutions for $\alpha=16$ and $32$ is shown in \cref{fig:poisson_fem}.
The results of Algorithm~\ref{alg:iterative} are summarized in Tables \ref{tab:poisson2} and \ref{tab:poisson3}.
Note that the wall-clock times decrease as the number of subdomains increases until the $16\times16$ case.
At this point the interface problem becomes large enough to overtake the cost of solving a least squares problem on a subdomain.

\begin{figure}
  \begin{subfigure}{0.495\textwidth}
    \centering
    \includegraphics[width=\linewidth]{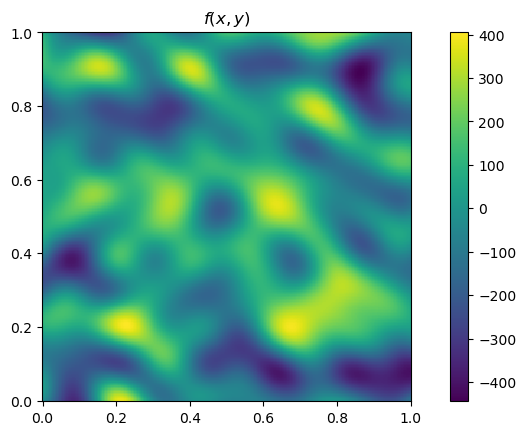}
    \caption*{(1a)}
  \end{subfigure}
  \hfill
  \begin{subfigure}{0.495\textwidth}
    \centering
    \includegraphics[width=.965\linewidth]{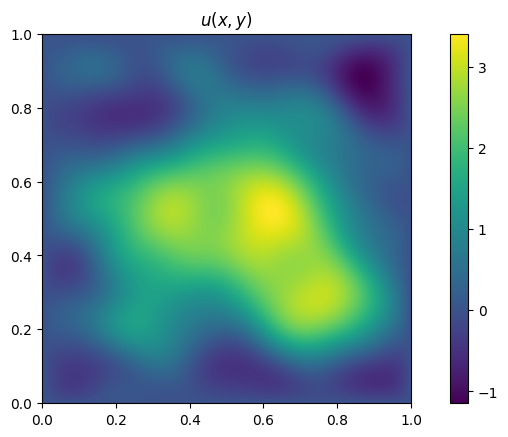}
    \caption*{(1b)}
  \end{subfigure}

  \begin{subfigure}{0.495\textwidth}
    \centering
    \includegraphics[width=\linewidth]{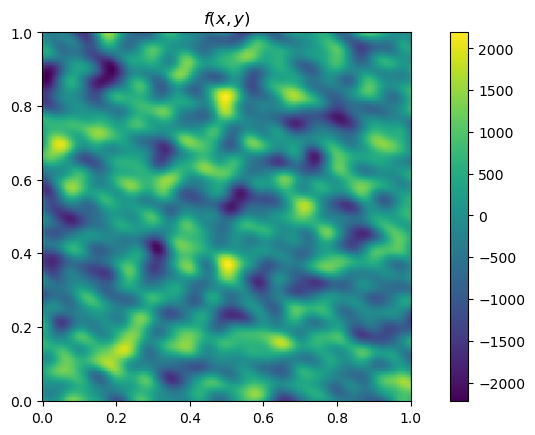}
    \caption*{(2a)}
  \end{subfigure}
  \hfill
  \begin{subfigure}{0.495\textwidth}
    \centering
    \includegraphics[width=.965\linewidth]{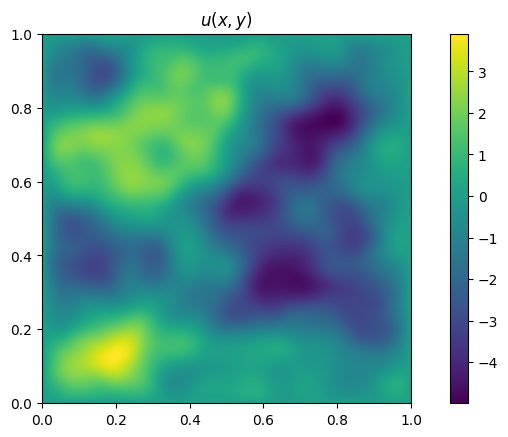}
    \caption*{(2b)}
  \end{subfigure}
  \caption{Graphical description of $f(x,y)$ and corresponding FEM solution $u(x,y)$ of 2-D Poisson equation; $u$ is computed via FEM on a $4{,}096 \times 4{,}096$ mesh.
  (1a) $f$ for $\alpha=16$ (1b) Solution $u$ for $\alpha=16$
  (2a) $f$ for $\alpha=32$ (2b) Solution $u$ for $\alpha=32$}
  \label{fig:poisson_fem}
\end{figure}
\begin{table}
  \caption{Relative $L^{2}$ and $H^{1}$ errors of the solutions of 2-D Poisson equation in the more oscillatory case when $\alpha=16$.
  The FEM solution on a $4{,}096 \times 4{,}096$ mesh is used as the reference solution.
  $N$ denotes the number of subdomains and the case of $N=1 \times 1$ is the ELM solution. For $N>1$, the solutions are computed by Algorithm~\ref{alg:iterative}.}
  \label{tab:poisson2}
  \centering
  \begin{tabular}{cccc}
         \toprule
         $N$ & $L^{2}$ rel. error & $H^{1}$ rel. error & Wall-clock time~(sec) \\
         \midrule \midrule
         $1 \times 1$ & 9.7629e-07 & 1.3357e-05 & 11003.49\\
         \midrule
         $2 \times 2$ & 7.5524e-07 & 1.0613e-05 & 313.48 \\
         $4 \times 4$ & 6.4662e-07 & 9.0596e-06 & 19.46 \\
         $8 \times 8$ & 6.8403e-07 & 9.5401e-06 & 3.36\\
         $16 \times 16$ & 5.7649e-07 & 8.4348e-06 & 8.32\\
         \bottomrule
  \end{tabular}
\end{table}
\begin{table}
  \caption{Relative $L^{2}$ and $H^{1}$ errors of the solutions of 2-D Poisson equation in the more oscillatory case when $\alpha=32$.
  The FEM solution on a $4{,}096 \times 4{,}096$ mesh is used as the reference solution.
  $N$ denotes the number of subdomains and the case of $N=1 \times 1$ is the ELM solution. For $N>1$, the solutions are computed by Algorithm~\ref{alg:iterative}.}
  \label{tab:poisson3}
  \centering
  \begin{tabular}{cccc}
         \toprule
         $N$ & $L^{2}$ rel. error & $H^{1}$ rel. error & Wall-clock time~(sec) \\
         \midrule \midrule
         $1 \times 1$ & 3.4758e-06 & 2.8890e-05 & 10992.84\\
         \midrule
         $2 \times 2$ & 2.7517e-06 & 2.2974e-05 & 314.09 \\
         $4 \times 4$ & 2.4976e-06 & 1.9726e-05 & 19.54 \\
         $8 \times 8$ & 2.4946e-06 & 2.1812e-05 & 3.28\\
         $16 \times 16$ & 2.2749e-06 & 1.9084e-05 & 8.11\\
         \bottomrule
  \end{tabular}
\end{table}

\subsubsection{2-D Poisson equation with a variable coefficient}
Next, we test the proposed DDM with an equation with variable coefficient
\begin{equation}
  \label{eqn:varpoisson}
  \left\{\begin{aligned}
  -\nabla \cdot (\rho \nabla u) &= f &&\text{ in } \Omega=(0,1)^2, \\
  u &= g &&\text{ on } \partial \Omega,
  \end{aligned}\right.
\end{equation}
where $\rho(\mathbf{x}) = \tanh(\mathrm{grf}(\mathbf{x})) + 1.1$, $f\equiv1$, and $g\equiv0$.
Here, $\mathrm{grf}$ is an approximate Gaussian random field defined in~\eqref{eqn:grf}.
We again test the cases $\alpha=16$ and $32$.
Note that $\rho$ varies from $0.1$ to $2.1$.
Since there is no closed form solution of~\eqref{eqn:varpoisson}, the FEM solution is used as the reference solution.
\cref{fig:varpoisson_fem} gives a graphical depiction of $\rho$ and the reference solution $u$.

\begin{figure}
  \begin{subfigure}{0.495\textwidth}
    \centering
    \includegraphics[width=\linewidth]{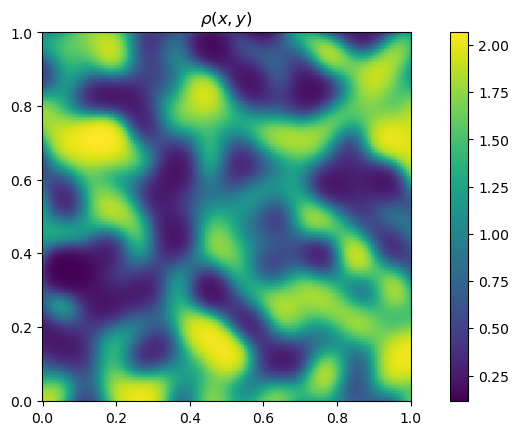}
    \caption*{(1a)}
  \end{subfigure}
  \hfill
  \begin{subfigure}{0.495\textwidth}
    \centering
    \includegraphics[width=.965\linewidth]{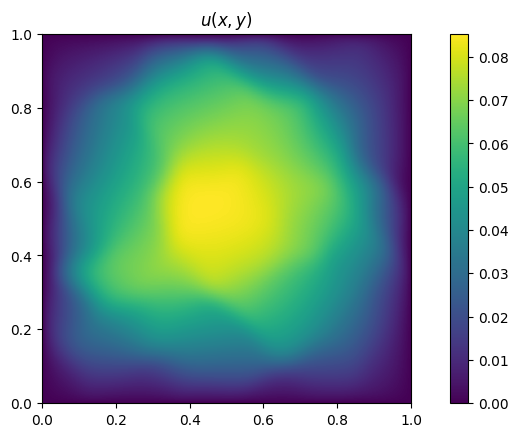}
    \caption*{(1b)}
  \end{subfigure}

  \begin{subfigure}{0.495\textwidth}
    \centering
    \includegraphics[width=\linewidth]{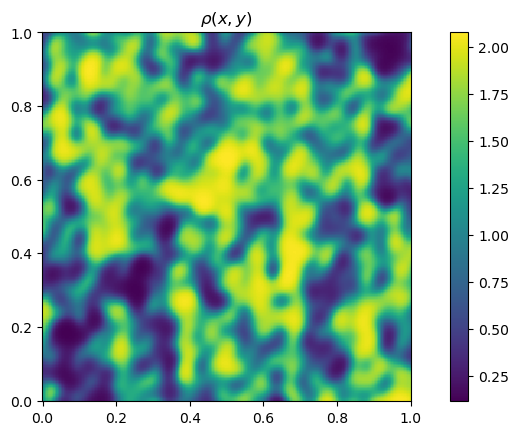}
    \caption*{(2a)}
  \end{subfigure}
  \hfill
  \begin{subfigure}{0.495\textwidth}
    \centering
    \includegraphics[width=.965\linewidth]{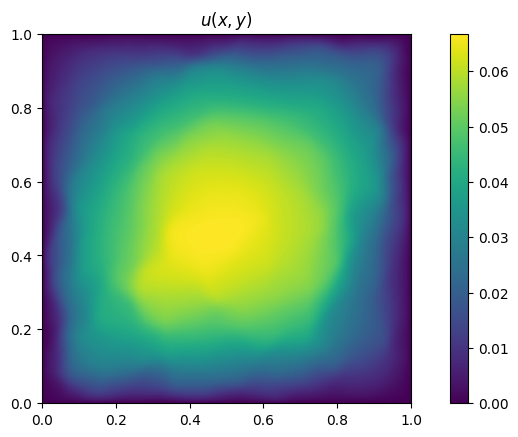}
    \caption*{(2b)}
  \end{subfigure}
  \caption{Graphical description of $\rho(x,y)$ and corresponding FEM solution $u(x,y)$ of 2-D Poisson equation with variable coefficient;
  $u$ is computed via FEM on a $4{,}096 \times 4{,}096$ mesh.
  (1a) $\rho$ for $\alpha=16$ (1b) Solution $u$ for $\alpha=16$
  (2a) $\rho$ for $\alpha=32$ (2b) Solution $u$ for $\alpha=32$}
  \label{fig:varpoisson_fem}
\end{figure}
Tables \ref{tab:varpoisson16} and \ref{tab:varpoisson32} show the relative $L^{2}$ and $H^{1}$ errors of the solutions of~\eqref{eqn:varpoisson} for $\alpha=16$ and $\alpha=32$, respectively.
In both cases, the best accuracy is achieved when $N=16\times16$, suggesting the proposed DDM is more effective in handling oscillatory problems.
In addition, the errors for $\alpha=16$ are much lower than that for $\alpha=32$.
This is in contrast to the FEM solution on a $256\times256$ mesh, which achieves relative $L^2$ errors of 2.4431e-04 and 3.7784e-04 for $\alpha=16$ and $32$, respectively.
This implies that ELMs fit low-frequency solutions much better, depending on the spectral bias property of the neural network~\cite{rahaman2019spectral}.

\begin{table}
  \caption{Relative $L^{2}$ and $H^{1}$ errors of the solutions of 2-D Poisson equation with variable coefficient when $\alpha=16$.
  The FEM solution on a $4{,}096 \times 4{,}096$ mesh is used as the reference solution.
  $N$ denotes the number of subdomains and the case of $N=1 \times 1$ is the ELM solution. For $N>1$, the solutions are computed by Algorithm~\ref{alg:iterative}.}
  \label{tab:varpoisson16}
  \centering
  \begin{tabular}{cccc}
         \toprule
         $N$ & $L^{2}$ rel. error & $H^{1}$ rel. error & Wall-clock time~(sec) \\
         \midrule \midrule
         $1 \times 1$ & 1.8452e-06 & 4.3370e-05 & 10990.45 \\
         \midrule
         $2 \times 2$ & 3.3742e-06 & 5.7660e-05 & 318.36 \\
         $4 \times 4$ & 6.9140e-06 & 4.1558e-05 & 20.67\\
         $8 \times 8$ & 3.3147e-06 & 2.1626e-05 & 4.54 \\
         $16 \times 16$ & 7.3209e-07 & 5.5242e-06 & 15.35 \\
         \bottomrule
  \end{tabular}
\end{table}
\begin{table}
  \caption{Relative $L^{2}$ and $H^{1}$ errors of the solutions of 2-D Poisson equation with variable coefficient when $\alpha=32$.
  The FEM solution on a $4{,}096 \times 4{,}096$ mesh is used as the reference solution.
  $N$ denotes the number of subdomains and the case of $N=1 \times 1$ is the ELM solution. For $N>1$, the solutions are computed by Algorithm~\ref{alg:iterative}.}
  \label{tab:varpoisson32}
  \centering
  \begin{tabular}{cccc}
         \toprule
         $N$ & $L^{2}$ rel. error & $H^{1}$ rel. error & Wall-clock time~(sec) \\
         \midrule \midrule
         $1 \times 1$ & 7.6186e-03 & 3.3764e-02 & 11020.72 \\
         \midrule
         $2 \times 2$ & 7.4366e-03 & 5.7018e-02 & 315.94 \\
         $4 \times 4$ & 5.8671e-03 & 5.6910e-02 & 20.61 \\
         $8 \times 8$ & 4.6123e-03 & 2.2875e-02 & 4.56 \\
         $16 \times 16$ & 6.0091e-04 & 4.8583e-03 & 14.10\\
         \bottomrule
  \end{tabular}
\end{table}

\subsubsection{Plate Bending}
Finally, we test a plate bending problem,
assuming the Kirchhoff-Love plate bending theory for isotropic and homogeneous plates.
For Young's modulus $E$, plate thickness $H$, and Poisson's ratio $\nu$, the flexural rigidity is given by $D=EH^3/12(1-\nu^2)$.
The equation for the simply supported plate is given by
\begin{equation*}
  \left\{\begin{aligned}
  \Delta^2 u &= \frac{q}{D} &&\text{ in } \Omega=(0,1)^2, \\
  u &= 0 &&\text{ on } \partial \Omega,\\
  \Delta u &= 0 &&\text{ on } \partial \Omega,
  \end{aligned}\right.
\end{equation*}
where $q$ is the transversal load.
The transmission conditions are continuity in $u$ and $\Delta u$ and their normal derivative across the interface.
We solve the equation with $q=\sin(\pi x)\sin(\pi y)$, $E=10^7$, $H=10^{-3}$, and $\nu=0.3$, and the exact solution is $u=\sin(\pi x)\sin(\pi y) / 4\pi^4D$.
\cref{tab:platebending} presents the relative $L^{2}$ and $H^{1}$ errors of the results of Algorithm~\ref{alg:iterative}.

\begin{table}
  \caption{Relative $L^{2}$ and $H^{1}$ errors of the solutions of a 2D plate bending problem.
  $N$ denotes the number of subdomains and the case of $N=1 \times 1$ is the ELM solution.
  For $N>1$, the solutions are computed by Algorithm~\ref{alg:iterative}.}
  \label{tab:platebending}
  \centering
  \begin{tabular}{cccc}
         \toprule
         $N$ & $L^{2}$ rel. error & $H^{1}$ rel. error & Wall-clock time~(sec) \\
         \midrule \midrule
         $1 \times 1$ & 8.7574e-09 & 1.2857e-07 & 11105.85 \\
         \midrule
         $2 \times 2$ & 1.1559e-08 & 1.6362e-07 & 322.29 \\
         $4 \times 4$ & 6.9948e-09 & 1.1603e-07 & 21.93 \\
         $8 \times 8$ & 3.6260e-08 & 7.3627e-07 & 3.74  \\
         $16 \times 16$ & 4.7774e-07 & 5.5870e-06 & 5.55 \\
         \bottomrule
  \end{tabular}
\end{table}

\section{Conclusion}
\label{Sec:Conc}
In this paper, we proposed a novel nonoverlapping domain decomposition method for ELM that is suitable for distributed memory computing.
Motivated by substructuring methods in numerical analysis, a local neural network was defined in each nonoverlapping subdomain and an auxiliary variable $u_{\Gamma}$ at the interface was introduced.
The coefficients of the last layer of local neural networks were eliminated using Schur complements, which gave a reduced system for $u_{\Gamma}$.
After that, $u_{\Gamma}$ was obtained by solving the system and the coefficients of the last layer of local neural networks were obtained using the auxiliary variable $u_{\Gamma}$ in parallel.
We also presented an initialization technique when the local neural network is equipped with a shallow neural network, and observed that our technique shows better performance than the commonly used He initialization.
Numerical experiments demonstrated the practical efficacy of the proposed nonoverlapping method when large number of neurons are used.
We expect that the proposed nonoverlapping DDM can be efficiently utilized for training ELMs with a large number of neurons through distributed parallel computation.
As a future direction, it would be interesting to achieve scalability by either introducing a coarse problem or developing a preconditioner.

\bibliographystyle{siam}
\bibliography{ddelm}

\end{document}